\newcommand{\V}[2]{V_{#1,#2}}
\def\Vui{\V{u}{i}}
\newcommand{\G}[2]{G_{#1,#2}}
\def\Gui{\G{u}{i}}
\newcommand{\set}[1]{\{#1\}}
\newcommand{\Vuid}[0]{\Vui^{id}}
\newcommand{\cla}[1]{\mathscr #1}
\newcommand{\clac}[0]{\cla{C}}
\newcommand{\clak}[0]{\cla{K}}
\newcommand{\simdif}[0]{\mathbin{\Delta}}
\def\td{\text{td}} 
\def\clos{\text{clos}}
\renewenvironment{proof}{\par \noindent \textbf{Proof.}
}{\hfill$\Box$\medskip}
\newenvironment{proofof}[1]{\par \noindent \textbf{Proof of #1.}
}{\hfill$\Box$\medskip} 
\newtheorem{theorem}{Theorem}
\newtheorem{corollary}[theorem]{Corollary}
\newtheorem*{claim}{Claim}
\newtheorem{proposition}[theorem]{Proposition}
\date{\today}
 \title{Locally identifying coloring in bounded expansion classes of
    graphs \footnote{This work was partially supported by the ANR grant
    EGOS 12 JS02 002 01.}}
  \author{Daniel Gon\c{c}alves\thanks{LIRMM - Univ. Montpellier 2, CNRS - 161 rue Ada, 34095
    Montpellier Cedex 5, France}, Aline Parreau\thanks{LIFL - Univ. Lille 1, INRIA - Parc scientifique de la haute borne,
    59650 Villeneuve d'Ascq, France}, Alexandre Pinlou{$^\dagger$\!\!} \thanks{Second affiliation: D\'epartement de Math\'ematiques et
    Informatique Appliqu\'es, 
    Universit\'e Paul-Val\'ery, Montpellier 3, Route de Mende, 34199
    Montpellier Cedex~5, France.}}
\begin{document}

\maketitle

\begin{abstract}
  A proper vertex coloring of a graph is said to be \emph{locally
    identifying} if the sets of colors in the closed neighborhood of
  any two adjacent non-twin vertices are distinct. The lid-chromatic
  number of a graph is the minimum number of colors used by a locally
  identifying vertex-coloring. In this paper, we prove that for any
  graph class of bounded expansion, the lid-chromatic number is
  bounded. Classes of bounded expansion include minor closed classes
  of graphs. For these latter classes, we give an alternative proof to
  show that the lid-chromatic number is bounded. This leads to an
  explicit upper bound for the lid-chromatic number of planar graphs.
  This answers in a positive way a question of Esperet~\emph{et al.}
  [L.~Esperet, S.~Gravier, M.~Montassier, P.~Ochem and A.~Parreau.
  Locally identifying coloring of graphs. \emph{Electronic Journal of
    Combinatorics}, {\bf 19(2)}, 2012.].
\end{abstract}  

\section{Introduction}
A vertex-coloring is said to be \emph{locally identifying} if
$(i)$ the vertex-coloring is proper (i.e. no adjacent vertices receive
the same color), and $(ii)$ for any adjacent vertices $u,v$, the set of
colors assigned to the closed neighborhood of $u$ differs from the set
of colors assigned to the closed neighborhood of $v$ whenever these
neighborhoods are distinct. The \emph{locally identifying chromatic number} of
the graph $G$ (or lid-chromatic number, for short), denoted by
$\chi_{lid}(G)$, is the smallest number of colors required in any
locally identifying coloring of $G$.

Locally identifying colorings of graphs have been recently introduced
by Esperet et al.~\cite{EGMOP12} and later studied by Foucaud et
al.~\cite{FHLPP12}. They are related to identifying codes
\cite{KCL98,lobs}, distinguishing colorings \cite{BRS03,BS97,CHS96}
and locating-colorings~\cite{CEHSZ02}. For example, upper bounds on
lid-chromatic number have been obtained for bipartite graphs,
$k$-trees, outerplanar graphs and bounded degree graphs.
An open question asked by Esperet et al.~\cite{EGMOP12} was to know whether
$\chi_{lid}$ is bounded for the class of planar graphs. In this paper,
we answer positively to this question proving more generally that
$\chi_{lid}$ is bounded for any class of bounded expansion.

In Section~\ref{sec:treedepth}, we first give a tight bound of
$\chi_{lid}$ in term of the tree-depth. Then we use the fact that any
class of bounded expansion admits a low tree-depth coloring (that is a
$k$-coloring such that each triplet of colors induces a graph of
tree-depth $3$, for some constant $k$) to prove that it has bounded
lid-chromatic number.

In Section~\ref{sec:minor}, we focus on minor closed classes of graphs
which have bounded expansion and give an alternative bound on the
lid-chromatic number, which gives an explicit bound for planar graphs.

The next section is devoted to introduce notation and preliminary
results.

\section{Notation and preliminary results}

Let $G=(V,E)$ be a graph.  For any vertex $u$, we denote by $N_G(u)$
its \emph{neighborhood} in $G$ and by $N_G[u]$ its \emph{closed
  neighborhood} in $G$ ($u$ together with its adjacent vertices). The
notion of neighborhood can be extended to sets as follows: for
$X\subseteq V$, $N_G[X] = \set{w \in V(G) \mid \exists v\in X, w\in
  N[v]}$ and $N_G(X) = N_G[X]\setminus X$.  When the considered graph
is clearly identified, the subscript is dropped.

The \emph{degree} of vertex $u$ is the size of its neighborhood.  The
\emph{distance} between two vertices $u$ and $v$ is the number of
edges in a shortest path between $u$ and $v$.  For $X\subseteq V$, we
denote by $G[X]$ the subgraph of $G$ \emph{induced by} $X$.

We say that two vertices $u$ and $v$ are \emph{twins} if $N[u]=N[v]$
(although they are often called \emph{true twins} in the literature,
we call them \emph{twins} for convenience).  In particular, $u$ and
$v$ are adjacent vertices. Note that if $u$ and $v$ are adjacent but
not twins, there exists a vertex $w$ which is adjacent to exactly one
vertex among $\{u,v\}$, i.e. $w\in N[u]\simdif N[v]$ (where $\Delta$
is the symmetric difference between sets). We say that $w$
\emph{distinguishes} $u$ and $v$, or simply $w$ \emph{distinguishes}
the edge $uv$. For a subset $X\subseteq V$, we say that a subset
$Y\subseteq V$ \emph{distinguishes} $X$ if for every pair $u,v$ of
non-twin vertices of $X$, there exists a vertex $w\in Y$ that
distinguishes the edge $uv$.  

Let $c:V\to \mathbb N$ be a vertex-coloring of $G$. The coloring $c$
is \emph{proper} if adjacent vertices have distinct colors. We denote
by $\chi(G)$ the \emph{chromatic number of $G$}, i.e. the minimum
number of colors in a proper coloring of $G$.  For any $X\subseteq V$,
let $c(X)$ be the set of colors that appear on the vertices of $X$. A
\emph{locally identifying coloring} (lid-coloring for short) of $G$ is
a proper vertex-coloring $c$ of $G$ such that for any two adjacent
vertices $u$ and $v$ that are not twins (i.e. $N[u]\ne N[v]$), we have
$c(N[u])\ne c(N[v])$. A graph $G$ is \emph{$k$-lid-colorable} if it
admits a locally identifying coloring using at most $k$ colors and the
minimum number of colors needed for any locally identifying coloring
of $G$ is the \emph{locally identifying chromatic number}
(lid-chromatic number for short) denoted by $\chi_{lid}(G)$. For a vertex
$u$, we say that $u$ \emph{sees} color $a$ if $a\in c(N[u])$. 
 For two adjacent vertices $u$ and $v$, a color that is in the set
$c(N[u])\simdif c(N[v])$ \emph{separates} $u$ and $v$, or simply
\emph{separates} the edge $uv$.  The notion of chromatic number (resp.
lid-chromatic number) can be extended to a class of graphs $\clac$ as
follows: $\chi(\clac) = \sup\set{\chi(G), G\in \clac}$ (resp.
$\chi_{lid}(\clac) = \sup\set{\chi_{lid}(G), G\in \clac}$).

The following theorem is due to Bondy~\cite{B72}: 
\begin{theorem}[Bondy's theorem \cite{B72}]
  Let $\mathcal A=\{A_1,...,A_n\}$ be a collection of $n$ distinct
  subsets of a finite set $X$. There exists a subset $X'$ of $X$ of
  size at most $n-1$ such that the sets $A_i\cap X'$ are all distinct.
\end{theorem}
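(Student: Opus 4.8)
The plan is to prove Bondy's theorem by induction on the number of sets $n$. The base case $n=1$ is trivial: a single set is vacuously ``distinct from the others'', so $X'=\emptyset$ (of size $0=n-1$) works.

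For the inductive step, assume the result for any family of $n-1$ distinct subsets, and let $A_1,\dots,A_n\subseteq X$ be distinct. Applying the induction hypothesis to $A_1,\dots,A_{n-1}$ yields a set $X''\subseteq X$ with $|X''|\le n-2$ such that $A_1\cap X'',\dots,A_{n-1}\cap X''$ are pairwise distinct. If $A_n\cap X''$ is also distinct from all of these, then $X'=X''$ already works since $n-2\le n-1$. Otherwise there is a unique index $m\le n-1$ with $A_m\cap X''=A_n\cap X''$; as $A_m\ne A_n$, choose any $x\in A_m\simdif A_n$ (necessarily $x\notin X''$) and set $X'=X''\cup\set x$, so that $|X'|\le n-1$.

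It then remains to verify that $A_1\cap X',\dots,A_n\cap X'$ are pairwise distinct. Distinctness among the indices $\le n-1$ is inherited from $X''$, since enlarging the ground set cannot merge two already-different intersections — this is the only point requiring a moment's thought, and it is immediate. The pair $m,n$ is separated by $x$, which lies in exactly one of $A_m,A_n$; and for $i\le n-1$ with $i\ne m$ we have $A_i\cap X''\ne A_m\cap X''=A_n\cap X''$, so $A_i\cap X'$ and $A_n\cap X'$ already differ within $X''$. This closes the induction, and I expect no genuine obstacle beyond the careful choice of $x$ and this stability check. (Alternatively, one can argue over $\mathbb F_2$: the span $W$ of the characteristic vectors of the sets $A_i\simdif A_1$ has dimension at most $n-1$, and taking $X'$ to index a maximal set of linearly independent columns of a basis matrix of $W$ makes the projection $W\to\mathbb F_2^{X'}$ injective, which is exactly the desired property; I would keep whichever write-up turns out shorter in context.)
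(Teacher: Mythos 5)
Your proof is correct. Note that the paper itself gives no proof of this statement --- it is quoted as Bondy's theorem with a citation to \cite{B72} and only the corollary about cliques is proved --- so there is no in-paper argument to compare against; your induction is the standard one, and the two delicate points (that the witness $x\in A_m\simdif A_n$ automatically lies outside $X''$, since otherwise $A_m\cap X''$ and $A_n\cap X''$ could not be equal, and that enlarging $X''$ to $X'$ preserves all previously achieved separations because intersecting back with $X''$ recovers them) are both handled. The $\mathbb{F}_2$ rank argument you sketch in parentheses is also valid, but the inductive write-up is the more self-contained of the two.
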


\begin{corollary}\label{cor:bondy}
  Let $C$ be a $n$-clique subgraph of $G$. There exists a vertex
  subset $S(C)\subseteq V(G)$ of size at most $n - 1$ that
  distinguishes all the pair of non-twin vertices of $C$.
\end{corollary}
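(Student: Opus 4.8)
The plan is to apply Bondy's theorem to the closed neighborhoods of the vertices of $C$, after first quotienting out the twin relation. Write $V(C)=\{v_1,\dots,v_n\}$ and take $X=V(G)$, which is finite. Observe that the twin relation ``$N[u]=N[v]$'' is an equivalence relation on $V(G)$ (reflexivity and symmetry are immediate, and transitivity is just transitivity of equality of sets), hence it partitions $V(C)$ into twin-classes; I would pick one representative from each class, say $v_{i_1},\dots,v_{i_m}$ with $m\le n$. Setting $A_j:=N_G[v_{i_j}]$ for $1\le j\le m$, the sets $A_1,\dots,A_m$ are then $m$ pairwise distinct subsets of $X$, since two vertices of $C$ lie in distinct twin-classes exactly when their closed neighborhoods differ.

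Next I would invoke Bondy's theorem on the collection $\{A_1,\dots,A_m\}$ of distinct subsets of $X$: it yields a set $S(C):=X'\subseteq X$ with $|S(C)|\le m-1\le n-1$ such that the traces $A_j\cap X'$ are pairwise distinct.

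Finally I would verify that $S(C)$ distinguishes every pair of non-twin vertices of $C$. Let $u,v\in V(C)$ be non-twins; then in particular $u\ne v$, so $uv\in E(G)$ because $C$ is a clique, and $u$, $v$ belong to distinct twin-classes with representatives $v_{i_j}$ and $v_{i_k}$, $j\ne k$. Since $N[u]=N[v_{i_j}]=A_j$, $N[v]=N[v_{i_k}]=A_k$, and $A_j\cap S(C)\ne A_k\cap S(C)$, there is a vertex $w\in S(C)$ lying in exactly one of $N[u]$, $N[v]$, that is, $w\in N[u]\simdif N[v]$; by definition such a $w$ distinguishes the edge $uv$. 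Hence $S(C)$ distinguishes all non-twin pairs of $C$ and has size at most $n-1$.

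There is no genuinely hard step here; the only point requiring care is that Bondy's theorem applies to a collection of \emph{distinct} subsets, which is precisely why one must first pass to twin-class representatives: if $C$ contained twins and one blindly used all $n$ closed neighborhoods, Bondy's theorem would not be applicable (and, indeed, $n-1$ separators would be more than enough but the hypothesis of the theorem would be violated).
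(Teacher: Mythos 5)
Your proof is correct and follows essentially the same route as the paper: apply Bondy's theorem to the (distinct) closed neighborhoods of the vertices of $C$ and observe that distinct traces yield a distinguishing vertex in the symmetric difference. The paper handles the twin issue by taking $\mathcal A$ to be the \emph{set} of closed neighborhoods (so duplicates collapse automatically), which is just a notational variant of your explicit passage to twin-class representatives.
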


\begin{proof}
  Let $C$ be a $n$-clique subgraph of $G$ induced by the vertex set
  $V(C) = \set{v_1, v_2, \ldots,v_n}$. Let $\mathcal A=\set{N[v_i]
    \mid v_i\in V(C)}$ be a collection of distinct subsets of the
  finite set $X=\bigcup_{1\le i \le n}N[v_i]$.  Note that some $v_i$'s
  might be twins in $G$ (i.e. $N[v_i]=N[v_j]$ for some $v_i,v_j\in
  V(C)$) and therefore $|\mathcal A|$ could be smaller than $n$. By
  Bondy Theorem, there exists $S(C)\subseteq X$ of size at most
  $|\mathcal A| - 1 \le n-1$ such that for any distinct elements
  $A_1,A_2$ of $\mathcal A$, we have $A_1 \cap S(C) \neq A_2 \cap
  S(C)$.
  
  Let us prove that $S(C)$ is a set of vertices that distinguish all
  the pairs of non-twin vertices of $C$. For a pair of non-twin vertices
  $v_i,v_j$ of $C$, we have $N[v_i]\neq N[v_j]$. By definition of $S(C)$,
  we have $N[v_i] \cap S(C) \neq N[v_j] \cap S(C)$, then there exists
  $w\in S(C)$ that belongs to $N[v_i]\simdif N[v_j]$.  Therefore, $w$
  distinguishes the edge $v_iv_j$.
\end{proof}

\section{Bounded expansion classes of graphs}\label{sec:treedepth}

A rooted tree is a tree with a special vertex, called the \emph{root}.
The \emph{height} of a vertex $x$ in a rooted tree is the number of
vertices on a path from the root to $x$ (hence, the height of the root
is 1). The \emph{height} of a rooted tree $T$ is the maximum height of
the vertices of $T$. If $x$ and $y$ are two vertices of $T$, $x$ is an
\emph{ancestor} of $y$ in $T$ if $x$ belongs to the path between $y$
and the root. The \emph{closure} $\clos(T)$ of a rooted tree $T$ is
the graph with vertex set $V(T)$ and edge set $\{xy \ | \ x\text{ is
  an ancestor of }y\text{ in }T, x\neq y\}$.  The \emph{tree-depth}
$\td(G)$ of a connected graph $G$ is the minimum height of a rooted
tree $T$ such that $G$ is a subgraph of $\clos(T)$. If $G$ is not
connected, the tree-depth of $G$ is the maximum tree-depth of its
connected components.

Let $p$ be a fixed integer. A \emph{low tree-depth coloring} of a
graph $G$ (relatively to $p$) is a coloring of the vertices of $G$
such that the union of any $i\leq p$ color classes induces a graph of
tree-depth at most $i$. Let $\chi^{\td}_p(G)$ be the minimum number of
colors required in such a coloring. Note that as tree-depth one graphs and 
tree-depth two graphs are respectively the stables and star forests, 
$\chi^{\td}_1$ and $\chi^{\td}_2$ respectively correspond to the usual
chromatic number and the star chromatic number.
\medskip

In the following of this section, we first give a tight bound on the
lid-chromatic number in terms of tree-depth.

\begin{proposition}\label{prop:td}
For any graph $G$,  $\chi_{lid}(G)\leq 2\td(G)-1$ and this is tight.
\end{proposition}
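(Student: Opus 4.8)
The plan is to prove the bound $\chi_{lid}(G)\le 2\td(G)-1$ by exhibiting an explicit colouring read off from a tree-depth decomposition, and to prove tightness by an explicit family; I expect the delicate point to be the choice of one bit per vertex.

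\emph{Setup and colouring.} Put $t=\td(G)$ and assume $G$ connected (otherwise colour each component). Fix a rooted tree $T$ of height $t$ with $G\subseteq\clos(T)$, write $h(v)\in\{1,\dots,t\}$ for the height of $v$ in $T$, and $L_i=\{v:h(v)=i\}$. Every edge of $G$ is an edge of $\clos(T)$, hence joins a vertex to a strict ancestor; in particular $h$ is strictly monotone along edges, each $L_i$ is an independent set, and below a vertex $v$ the only possible neighbour at a prescribed smaller height is the (unique) ancestor of $v$ of that height. I colour each $v\in L_i$ with a colour from $\{2i-2,2i-1\}$ when $i\ge 2$, and every vertex of $L_1$ with the single colour $1$. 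This uses $1+2(t-1)=2t-1$ colours, the colour of a vertex encodes its level, and the colouring is proper for any choice since the ends of an edge lie on different levels. (Equivalently this colouring is built by induction on $t$, peeling off the bottom level $L_t$, which is an independent set, and spending two fresh colours each time.) The remaining freedom is, for every $v$ with $h(v)\ge 2$, one bit $\beta(v)$ telling which of the two colours of level $h(v)$ it gets.

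\emph{Checking the lid property.} Let $u,v$ be adjacent non-twins with $u$ an ancestor of $v$, and set $a=h(u)<h(v)=b$. Since colours encode levels it suffices to find a level on which $c(N[u])$ and $c(N[v])$ differ. Pick $w\in N[u]\simdif N[v]$ (note $w\notin\{u,v\}$). If $h(w)<a$, then $u$ and $v$ have a common ancestor $z$ at level $h(w)$ (they share all ancestors up to $u$), $z$ is the only level-$h(w)$ vertex that can belong to $N[u]$ or to $N[v]$, and $z=w$ is adjacent to exactly one of $u,v$; hence $c(N[u])$ and $c(N[v])$ already differ on level $h(w)$, for any choice of the bits. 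It therefore remains to treat the bad edges $uv$ all of whose distinguishers sit at level $\ge a$: each such distinguisher is then a proper descendant of $u$, or lies strictly between $u$ and $v$ on the root-path, or is a proper descendant of $v$. This is exactly where the bits $\beta$ are needed: they must be chosen so that for every such edge the colour-sets of $N[u]$ and $N[v]$ differ on some level $\ge a$ — typically by arranging that some relevant ancestor ($u$ or $v$) has two neighbours of equal level carrying different bits, or a level-$b$ neighbour whose bit is not $\beta(v)$. The plan is to phrase all of these demands as constraints on the bits — a $2$-colouring / not-all-equal instance over $\bigcup_{i\ge 2}L_i$ — and to show, exploiting the rigidity of $T$ (ancestor chains, uniqueness of the ancestor of a given height), that a satisfying assignment exists. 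I expect this bit-assignment to be the main obstacle, together with the borderline case of a vertex with a single neighbour on the bottom level, which may force a mild strengthening of the statement being proved.

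\emph{Tightness.} For the lower bound take $G_t=\clos(T_t)$, where $T_t$ is the complete $N$-ary rooted tree of height $t$ with $N$ large. Then $\td(G_t)=t$ ($G_t\subseteq\clos(T_t)$ and contains the $t$-clique of any root-to-leaf path). I claim $\chi_{lid}(G_t)=2t-1$, by induction on $t$: $G_1$ is a point and $G_2=K_{1,N}$ has lid-chromatic number $3$; for the step, in $G_t$ the root $r$ is adjacent to everything inside $N$ disjoint copies of $G_{t-1}$, and the argument is the one that makes $K_{1,N}$ cost $3$ rather than $2$ — one shows first that each copy is already forced to use many colours, and then that $r$ forces two further colours, the recursion delivering exactly $2t-1$. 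The matching upper bound for $G_t$ is the general bound just proved, so $2\td(G)-1$ is tight.
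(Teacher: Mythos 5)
Your upper-bound argument has a genuine gap at its central step, and unfortunately the step is not just unproven but false as stated: the ``two colours per level'' scheme cannot always be completed by a choice of bits. Concretely, take $G$ on vertices $\{r,u,w_1,w_2,w_3\}$ with edges $ru$, $rw_1$, $rw_2$, $rw_3$, $uw_1$, $uw_2$. It contains the triangle $\{r,u,w_1\}$, so $\td(G)\ge 3$, and $G$ is a subgraph of $\clos(T)$ for the height-$3$ tree $T$ in which $u$ is the child of $r$ and $w_1,w_2,w_3$ are the children of $u$; hence $\td(G)=3$ and your scheme assigns $c(r)=1$, $c(u)\in\{2,3\}$, $c(w_i)\in\{4,5\}$. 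The edge $uw_1$ (distinguished only by $w_2$) forces $c(w_2)\ne c(w_1)$, so $\{c(w_1),c(w_2)\}=\{4,5\}$; but then for the edge $ru$ (distinguished only by $w_3$) we get $c(w_3)\in\{4,5\}\subseteq c(N[u])$, so $c(N[r])=c(N[u])$ and the colouring is not locally identifying. For this particular graph another height-$3$ tree (putting $w_3$ at level $2$) happens to work, but your proposal neither specifies how to choose $T$ nor proves that a good choice of bits exists for it --- and you acknowledge as much (``I expect this bit-assignment to be the main obstacle''). Since that is exactly where all the difficulty of the proposition lives, the proof is incomplete. The tightness part is also only a sketch: the inductive claim that $\clos$ of the complete $N$-ary tree costs exactly $2t-1$ colours is asserted, not proved (it is plausible --- I can verify it for $t\le 3$ --- but the induction step ``$r$ forces two further colours'' needs an actual argument).

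For comparison, the paper does not distribute colours uniformly over levels. It inducts on $\td(G)$ by peeling off the initial path $R=\{r_1,\dots,r_s\}$ of $T$ before its first branching (with $s$ chosen minimal, so that each $r_i$ has neighbours in every component of $G\setminus R$), recursively lid-colours each component $G_j$ of $G\setminus R$ with $2(k-s)-1$ colours, and --- crucially --- exploits the freedom to permute colours in these recursive colourings so as to minimise the set of colours that the $r_i$ ``see'' in each $G_j$; this is what separates the edges between $R$ and the components. Fresh colours on $R$ handle the edges inside the components, and Bondy's theorem (Corollary~\ref{cor:bondy}) is used to distinguish the non-twin pairs inside the clique $R$ at a cost of $s-\bar{s}$ extra colours, with the inequality $\bar{s}\ge s_1$ closing the count at $2k-1$. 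If you want to salvage your approach, you would need a comparable mechanism --- some induction or reordering of $T$ that lets you re-decide earlier bits when a conflict like the one above appears; the tightness example $H_n$ of the paper (an $n$-clique with pendant vertices on $n-1$ of its vertices, all $2n-1$ vertices forced to receive distinct colours) also shows that any per-level budget must be very unevenly spent.
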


Using this bound, we then bound the lid-chromatic number
in terms of $\chi^{\td}_3$.

\begin{theorem}\label{thm:chlidch3}
For any graph $G$, $$\chi_{lid}(G)\leq 6^{\chi^{\td}_3(G) \choose 3}.$$
\end{theorem}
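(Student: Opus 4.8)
The plan is to take a low tree-depth coloring of $G$ and combine it with the lid-colorings of its tree-depth-$3$ pieces provided by Proposition~\ref{prop:td}, through a product construction indexed by triples of colors. Write $k=\chi^{\td}_3(G)$ and fix a coloring $\phi\colon V\to\{1,\dots,k\}$ such that for every set $T$ of at most $3$ colors the induced subgraph $G_T:=G[\phi^{-1}(T)]$ has tree-depth at most $|T|$; in particular each color class is independent, and for every $3$-element set $T\subseteq\{1,\dots,k\}$ Proposition~\ref{prop:td} provides a lid-coloring $c_T\colon V(G_T)\to\{1,\dots,5\}$ of $G_T$. For $v\in V$ let $c(v)$ be the partial function on the set of $3$-element subsets of $\{1,\dots,k\}$ that maps each $T$ with $\phi(v)\in T$ to $c_T(v)$ and is undefined on every other triple. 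Since such a partial function is determined by $\binom{k}{3}$ coordinates, each of which is either undefined or one of $5$ values, $c$ uses at most $(1+5)^{\binom{k}{3}}=6^{\binom{k}{3}}$ colors, and it remains to check that $c$ is locally identifying. One may assume $k\ge 3$, the cases $k\le 3$ being handled directly by Proposition~\ref{prop:td} (when $k=3$ there is a unique triple and $c$ is already a lid-coloring of $G$ with at most $5$ colors). Properness is immediate: if $uv\in E(G)$ then $\phi(u)\ne\phi(v)$ because color classes are independent, so for any $3$-set $T$ containing $\phi(u)$ and $\phi(v)$ we have $u,v\in V(G_T)$ and $uv\in E(G_T)$, hence $c_T(u)\ne c_T(v)$ and $c(u)\ne c(v)$.

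The core of the argument is the separation condition. Let $uv\in E(G)$ with $N_G[u]\ne N_G[v]$, and let $w$ distinguish the edge $uv$, say $w\in N_G[u]\setminus N_G[v]$; then $w\notin\{u,v\}$, $wu\in E(G)$ and $wv\notin E(G)$. The set $\{\phi(u),\phi(v),\phi(w)\}$ has at most $3$ elements and so is contained in some $3$-element set $T\subseteq\{1,\dots,k\}$ (recall $k\ge 3$). Inside $G_T$ the vertices $u,v,w$ are all present, $uv,uw\in E(G_T)$ and $wv\notin E(G_T)$, so $w\in N_{G_T}[u]\setminus N_{G_T}[v]$ and thus $N_{G_T}[u]\ne N_{G_T}[v]$; since $c_T$ is locally identifying, $c_T(N_{G_T}[u])\ne c_T(N_{G_T}[v])$. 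Now $N_{G_T}[x]=N_G[x]\cap V(G_T)$ for $x\in\{u,v\}$, and for $y\in N_G[x]$ the coordinate $c(y)_T$ is defined exactly when $y\in V(G_T)$, in which case it equals $c_T(y)$; hence the set of defined $T$-coordinates occurring on $c(N_G[x])$ is exactly $c_T(N_{G_T}[x])$. If we had $c(N_G[u])=c(N_G[v])$, then the sets of defined $T$-coordinates occurring on the two sides would coincide, that is $c_T(N_{G_T}[u])=c_T(N_{G_T}[v])$, a contradiction. Therefore $c(N_G[u])\ne c(N_G[v])$, so $c$ is a locally identifying coloring of $G$ using at most $6^{\binom{k}{3}}$ colors.

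I expect the only delicate point to be this last transfer between $G$ and the pieces $G_T$: one has to argue that equality of two sets of ``partial-function colors'' forces equality of their coordinatewise projections, and to note that the ``undefined'' symbol (the source of the base $6=5+1$) lies outside the value set $\{1,\dots,5\}$ of every $c_T$, so that it cannot spuriously merge the two color sets. Everything else --- choosing a triple $T$ that contains $\phi(u),\phi(v),\phi(w)$, and the reduction of the small cases $k\le 3$ --- is routine bookkeeping about how closed neighborhoods restrict to the induced subgraphs $G_T$.
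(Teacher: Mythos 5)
Your proposal is correct and follows essentially the same route as the paper: a product coloring indexed by the $\binom{k}{3}$ triples of classes of a low tree-depth coloring, with each factor a $5$-color lid-coloring of the tree-depth-$3$ piece from Proposition~\ref{prop:td} plus one extra symbol for vertices outside the piece (the paper uses a literal color $0$ where you use ``undefined''), and the same choice of a triple containing $\phi(u),\phi(v),\phi(w)$ to transfer separation. Your write-up is in fact slightly more careful than the paper's on the routine points (padding $\{\phi(u),\phi(v),\phi(w)\}$ to a genuine $3$-set, the case $k\le 3$, and the coordinatewise-projection argument showing that distinct $T$-components force distinct tuple color sets).
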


Classes of graphs of bounded expansion have been introduced by Ne\v
set\v ril and Ossona de Mendez~\cite{NO08}. These classes contain
minor closed classes of graphs and any class of graphs defined by an
excluded topological minor. Actually, these classes of graphs are
closely related to low tree-depth colorings:

\begin{theorem}[Theorem 7.1~\cite{NO08}]\label{thm:nesetril-pom}
  A class of graphs $\clac$ has bounded
  expansion if and only if $\chi^{\td}_p(\clac)$ is bounded for
  any~$p$.
\end{theorem}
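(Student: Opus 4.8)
The plan is to prove the two implications of the equivalence separately. Throughout, I will quantify bounded expansion through the \emph{greatest reduced average density at depth $r$}, written $\nabla_r(G)$: the maximum of $|E(H)|/|V(H)|$ over all graphs $H$ obtained from a subgraph of $G$ by contracting pairwise disjoint connected subgraphs each of radius at most $r$ (an \emph{$r$-shallow minor}). The class $\clac$ has bounded expansion precisely when $\sup_{G\in\clac}\nabla_r(G)$ is finite for every fixed $r$. Two elementary facts tie this to tree-depth and will be used repeatedly: a graph of tree-depth $t$ contains no path on $2^t$ vertices, while conversely, by a depth-first search argument, a graph whose longest path has at most $\ell$ vertices has tree-depth at most $\ell$. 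In particular the class of graphs of tree-depth at most $t$ has bounded expansion, with $\nabla_r$ bounded by a function of $t$ and $r$ alone.

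I would first treat the direction ``$\chi^{\td}_p(\clac)$ bounded for all $p$ $\Rightarrow$ $\clac$ has bounded expansion'', which is the routine one. Fix $r$ and set $p=2r+2$. For $G\in\clac$ take a low tree-depth coloring relative to $p$ with $N:=\chi^{\td}_p(G)$ colors. Every edge of an $r$-shallow minor of $G$ is witnessed by a path on at most $2r+2=p$ vertices of $G$ (two radius-$\le r$ balls joined by one edge), hence by a path using at most $p$ colors; assigning each such edge to the $p$-subset of colors of its witnessing path, the edges in a fixed subset $S$ form a shallow minor of the subgraph induced by the classes in $S$, whose tree-depth is at most $p$. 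Summing over the $\binom{N}{p}$ subsets and using that bounded tree-depth forces bounded $\nabla_r$, we get $\nabla_r(G)\le \binom{N}{p}\cdot c(p,r)$, a bound independent of $G$. Hence $\clac$ has bounded expansion.

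The substantial direction is the converse. Here I would follow the transitive--fraternal augmentation method. Since bounded expansion bounds $\nabla_0$, the class is degenerate, so $G$ admits an acyclic orientation of bounded out-degree. I then iteratively \emph{augment} this orientation: a \emph{fraternal} step adds an arc between two vertices sharing a common out-neighbour, and a \emph{transitive} step adds $x\to z$ whenever $x\to y\to z$. Iterating a number of rounds depending only on $p$ transitively closes the short directed paths of $G$. A proper coloring of the underlying undirected graph then uses a bounded number of colors, and one checks it is a low tree-depth coloring relative to $p$: within any $i\le p$ color classes, the arcs created by augmentation certify an elimination order, so each connected subgraph on those classes has a vertex forced to be a centre, giving tree-depth at most $i$ by induction.

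The crux---and the only place where bounded expansion is used in full strength rather than mere degeneracy---is to show that each augmentation round raises the out-degrees by only a bounded amount, so that after the fixed number of rounds the out-degree, and hence the chromatic number of the augmented graph, remains bounded. Every arc added in a round corresponds to a bounded-depth shallow minor of $G$ (two arcs glued at a common vertex, or a contracted directed path), and the bounds $\nabla_r(\clac)$ at the relevant small depths $r$ control how many such arcs can meet at a vertex; tracking how the depth parameter grows through successive rounds and verifying that these grads suffice is the main technical obstacle. A cleaner alternative I would fall back on is to invoke the equivalence of bounded expansion with bounded weak coloring numbers $\mathrm{wcol}_r$ for all $r$, and to build the low tree-depth coloring directly from an ordering witnessing a small value of $\mathrm{wcol}_{2^p}(G)$, using weak reachability to encode the ancestor relation of a depth-$p$ tree-depth decomposition.
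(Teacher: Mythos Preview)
The paper does not prove this theorem at all: it is quoted verbatim as Theorem~7.1 of Ne\v{s}et\v{r}il and Ossona de Mendez~\cite{NO08} and used as a black box to deduce Corollary~\ref{cor:bounded}. There is therefore nothing in the present paper to compare your proposal against; any comparison would have to be with the original~\cite{NO08} argument, not with this one.

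That said, your sketch is broadly faithful to the strategy in~\cite{NO08}. The substantive direction via transitive--fraternal augmentation is indeed the method used there, and you correctly identify the delicate point (controlling the out-degree growth through the rounds via the grads $\nabla_r$). Your fallback through weak coloring numbers is a later, alternative route due to Zhu and others, not the original proof. For the easy direction, your argument is a bit loose: attributing each edge of an $r$-shallow minor to a witnessing path of length $\le 2r+1$ in $G$ is fine, but the claim that the edges whose witnesses use a fixed $p$-set of colors themselves form an $r$-shallow minor of the induced subgraph on those colors needs more care, since the branch sets of the minor may span many colors even when a particular witnessing path uses few. A cleaner version bounds $\nabla_0$ directly from $\chi^{\td}_2$ (pairs of classes induce star forests, hence $O(n)$ edges overall) and then iterates, or simply observes that graphs of tree-depth at most $p$ have all grads bounded in terms of $p$ and argues on the induced $p$-colored pieces after first bounding the edge density of the shallow minor in terms of that of $G$.
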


We therefore deduce the following corollary from
Theorems~\ref{thm:chlidch3} and~\ref{thm:nesetril-pom}: 

\begin{corollary}\label{cor:bounded}
  For any class $\clac$ of bounded expansion, 
  $\chi_{lid}(\clac)$ is bounded.
\end{corollary}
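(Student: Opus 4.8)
The plan is to combine Theorem~\ref{thm:chlidch3} with the characterization of bounded expansion given in Theorem~\ref{thm:nesetril-pom}; essentially no new argument is needed. First I would apply Theorem~\ref{thm:nesetril-pom} with the single value $p=3$: since $\clac$ has bounded expansion, $\chi^{\td}_3(\clac) = \sup\set{\chi^{\td}_3(G), G\in\clac}$ is finite, say $\chi^{\td}_3(\clac) = k$ for some constant $k$ depending only on the class $\clac$. In particular, every $G\in\clac$ satisfies $\chi^{\td}_3(G)\le k$.

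Next I would feed this into Theorem~\ref{thm:chlidch3}, which is applied to each $G\in\clac$ separately. Since the map $x\mapsto 6^{x\choose 3}$ is non-decreasing on the nonnegative integers, the inequality $\chi^{\td}_3(G)\le k$ gives $\chi_{lid}(G)\le 6^{\chi^{\td}_3(G)\choose 3}\le 6^{k\choose 3}$. As this bound is independent of $G$, taking the supremum over $G\in\clac$ yields $\chi_{lid}(\clac)\le 6^{k\choose 3}<\infty$, which is exactly the claim.

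There is no genuine obstacle: the corollary is a one-line deduction, and all the work has been done earlier --- in Theorem~\ref{thm:chlidch3} (the combinatorial core, which rests on Proposition~\ref{prop:td} and low tree-depth colorings) and in the quoted Theorem~\ref{thm:nesetril-pom}. The only subtlety worth keeping in mind is that Theorem~\ref{thm:nesetril-pom} supplies a \emph{uniform} bound $k$ valid for the whole class, not merely the finiteness of $\chi^{\td}_3$ graph by graph; this uniformity is precisely what licenses the final passage to the supremum.
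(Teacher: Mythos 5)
Your proof is correct and matches the paper exactly: the paper states this corollary as an immediate consequence of Theorems~\ref{thm:chlidch3} and~\ref{thm:nesetril-pom} with no further argument, which is precisely your deduction. Your remark about needing the uniform bound over the whole class (not just graph-by-graph finiteness) is a valid observation, though the paper's definition of $\chi^{\td}_p(\clac)$ as a supremum already builds this in.
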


It is in particular true for a class of bounded tree-width. A consequence is that $\chi_{lid}$ is bounded for chordal graphs by a function of the clique number (which is equals to the tree-width plus 1 for a chordal graph). It is conjectured by Esperet {\em et al.} \cite{EGMOP12} that $\chi_{lid}(G)\leq 2\omega(G)$ if $G$ is chordal.

\medskip

We now prove Proposition~\ref{prop:td}.

\begin{proofof}{Proposition~\ref{prop:td}}
  Let us first prove that the bound is tight. Consider the graph
  $H_n$ obtained from a complete graph, with vertex set $\{a_1,\ldots,
  a_n\}$, by adding a pendant vertex $b_i$ to every $a_i$ but one, say
  for $1\le i< n$. The tree-depth of this graph is at least $n$ as it
  contains a $n$-clique. Indeed, given a rooted tree $T$, two vertices
  at the same height are non-adjacent in $clos(T)$, we thus need at
  least $n$ levels. Actually the tree-depth of this graph is at most
  $n$ since the tree $T$ rooted at $a_1$, and such that $a_i$ has two sons
  $a_{i+1}$ and $b_i$, for $1\le i< n$, has height $n$ and is such that $clos(T)$ 
  contains $H_n$ as a subgraph.

  Let us show that in any lid-coloring of $H_n$ all the vertices must
  have distinct colors, and thus use $2n-1 = 2 \td(H_n) - 1$
  colors. Indeed, two vertices $a_i$ must have different colors as the
  coloring is proper. A vertex $b_j$ cannot use the same color as a
  vertex $a_i$, as otherwise the vertex $a_j$ would only see the $n$
  colors used in the clique, just as $a_n$. Similarly if two vertices
  $b_i$ and $b_j$ would use the same color, the vertices $a_i$ and
  $a_j$ would see the same set of colors.

  Let us now focus on the upper bound.  We prove the result for a
  connected graph and by induction on the tree-depth of $G$, denoted
  by $k$.  The result is clear for $k=1$ (the graph is a single
  vertex).
  
  Let $G$ be a graph of tree-depth $k>1$ and let $T$ be a rooted tree
  of height $k$ such that $G$ is a subgraph of $\clos(T)$.  If $T$ is
  a path, the result is clear since there are only $k$ vertices.  So
  assume that $T$ is not a path, and let $r$ be the root of $T$.  Let
  $s$ be the smallest height such that there are at least two vertices
  of height $s+1$. We name $r_i$, for $i\in\set{1,...,s}$, the unique
  vertex of height $i$.  Let $R=\{r_1,...,r_{s}\}$.  Note that each of
  the vertices of $R$ is adjacent to all the vertices of $\clos(T)$.
  Therefore, we can choose the way we label the $s$ vertices in $R$
  (i.e. we can choose the height of each of them in $T$) without
  changing $\clos(T)$.
  
  Necessarily, $G\setminus R$ has at least two connected components.
  Let $G_1,\ldots,G_{\ell}$ be its connected components and thus
  $\ell\ge 2$. We choose $T$ such that $s$ is minimal. It implies that
  for each $i\in \{1,...,s\}$, $r_i$ has neighbors in all the
  components $G_1$,...,$G_{\ell}$. Indeed, if it is not the case, by
  permuting the elements of $R$ (this is possible by the above
  remark), we can assume without loss of generality that $r_{s}$ does
  not have a neighbor in $G_{\ell}$. Therefore, the set of edges
  $e(r_{s},G_\ell) = \{r_{s}x : x\in V(G_\ell)\}$ of $\clos(T)$ are
  not used by $G$.  Then let $T'$ be the tree obtained from $T$ by
  moving the whole component $G_{\ell}$ one level up in such a way
  that the root of the subtree corresponding to $G_{\ell}$ is now the
  son of $r_{s-1}$ (instead of $r_s$ previously). Note that
  $\clos(T')$ is isomorphic to $\clos(T) \setminus e(r_{s},G_\ell)$
  and thus $G$ is a subgraph of $\clos(T')$.  This new tree $T'$ has
  two vertices at height $s$, contradicting the minimality of $s$.
  
  Any connected component $G_j$ has tree-depth at most $k'=k-s<k$.
  By induction, for each $j\in \{1,...,\ell\}$, there exists a
  lid-coloring $c_j$ of $G_j$ using colors in
  $\set{1,\ldots,2k'-1}$. For each $c_j$, there is a minimum value
  $s_j$ such that every vertex $r_i$ sees a color in
  $\set{1,\ldots,s_j}$ in $G_j$. We choose a $(2k'-1)$-lid-coloring
  $c_j$ of $G_j$ such that $s_j$ is minimized. Note that for each
  color $a\le s_j$, there exists $r_i\in R$ such that $r_i$ sees color
  $a$ in $G_j$ but no other color of $\set{1,\ldots,s_j}$. Otherwise,
  after permuting colors $a$ and $s_j$, every vertex $r_i\in R$ would
  see a color in $\set{1,\ldots,s_{j}-1}$, contradicting the
  minimality of $s_j$. Assume without loss
  of generality that $s_1 \ge s_2\ge \ldots \ge s_\ell$.
  
  We replace in $c_1$ the colors $1,2,\ldots, s_1$ by
  $1',2',\ldots,s'_1$. Note that now each vertex $r_i$ sees a color in
  $\set{1',\ldots,s'_1}$ (in $G_1$) and a color in
  $\set{1,\ldots,s_2}$ (in $G_2$). Furthermore, the other vertices of
  $G$ (that is the vertices in $G_1,\ldots,G_\ell$) do not have this
  property since $s_1\ge s_2$. Thus at this step every edge $xr_i$
  with $x$ in some $G_j$ is separated.
  
  Now we color each vertex $r_i$ with color $i^*$.  Let $c : V(G) \to
  \set{1^*,\ldots,s^*}\cup\set{1',\ldots,s'_1}\cup\set{1,\ldots,2k'-1}$
  be the current coloring of $G$.
  
  Note that now every distinguishable edge $xy$ in some $G_j$ is
  separated. Indeed, either $xy$ was distinguished in $G_j$ and it has
  been separated by $c_j$, or $xy$ is distinguished by some $r_i$ and
  it is separated by the color $i^*$. Note also that $c$ is a proper coloring.

  It remains to deal with the edges $r_ir_j$. For that purpose we will
  refine some color classes. In the following lemma we show that such
  refinements do not damage what we have done so far.

  \begin{claim}
    Consider a graph $G$ and a coloring $\varphi\ :\ V(G)
    \longrightarrow \set{1,\ldots,k}$.  Consider any refinement
    $\varphi'$ of $\varphi$, obtained from $\varphi$ by recoloring
    with color $k+1$ some vertices colored $i$, for some $i$. Any edge
    $xy$ of $G$ properly colored (resp. separated) by $\varphi$ is
    properly colored (resp. separated) by $\varphi'$.
  \end{claim}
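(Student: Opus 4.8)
The plan is to exploit that a refinement of the described kind is nothing more than the composition of $\varphi'$ with a ``merge'' map. Precisely, let $i\in\set{1,\ldots,k}$ be the color that is split, and let $S\subseteq\varphi^{-1}(i)$ be the set of vertices recolored with $k+1$, so that $\varphi'(v)=k+1$ for $v\in S$ and $\varphi'(v)=\varphi(v)$ otherwise. Define $\pi:\set{1,\ldots,k+1}\to\set{1,\ldots,k}$ by $\pi(k+1)=i$ and $\pi(a)=a$ for $a\le k$. Then $\varphi=\pi\circ\varphi'$, and, crucially, since taking the image of a set commutes with applying a function, for every vertex subset $X\subseteq V(G)$ we have $\varphi(X)=\pi(\varphi'(X))$; in particular $\varphi(N[z])=\pi(\varphi'(N[z]))$ for every vertex $z$.

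With this in hand the two assertions become immediate, and I would argue each by contraposition. For properness: if an edge $xy$ is properly colored by $\varphi$ then $\varphi(x)\ne\varphi(y)$; were $\varphi'(x)=\varphi'(y)$ we would get $\varphi(x)=\pi(\varphi'(x))=\pi(\varphi'(y))=\varphi(y)$, a contradiction, so $xy$ is properly colored by $\varphi'$. For separation: suppose $xy$ is separated by $\varphi$, i.e.\ $\varphi(N[x])\ne\varphi(N[y])$; if we had $\varphi'(N[x])=\varphi'(N[y])$, then applying $\pi$ to both sides and using $\varphi(N[z])=\pi(\varphi'(N[z]))$ would give $\varphi(N[x])=\varphi(N[y])$, again a contradiction. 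Hence $xy$ is separated by $\varphi'$.

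There is essentially no obstacle here; the only point requiring a moment's care is the observation that the identity $\varphi=\pi\circ\varphi'$ passes from individual vertices to color \emph{sets} of neighborhoods, which is just the fact that the image of a union is the union of the images. An alternative, more pedestrian route would be a direct case analysis: the only colors whose membership in $\varphi'(N[x])$ or $\varphi'(N[y])$ can differ from their membership in $\varphi(N[x])$, $\varphi(N[y])$ are $i$ and $k+1$, and $k+1\in\varphi'(N[z])$ forces $i\in\varphi(N[z])$ while $\varphi'$ never introduces color $i$ where $\varphi$ had none; tracking these cases also yields the claim, but the composition viewpoint makes the bookkeeping unnecessary.
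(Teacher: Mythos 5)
Your proof is correct and rests on the same observation as the paper's: the refinement can be undone by a merge map, so no distinction between color sets of closed neighborhoods is lost. The paper states this directly in one line ("if $i\in \varphi(N[x])\simdif \varphi(N[y])$ then $i$ or $k+1 \in \varphi'(N[x])\simdif \varphi'(N[y])$"), whereas you package it as contraposition through $\varphi=\pi\circ\varphi'$ and the identity $\varphi(N[z])=\pi(\varphi'(N[z]))$ --- a slightly cleaner formulation of the same argument, with your "pedestrian route" being essentially the paper's own proof.
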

  Indeed if $\varphi(x)\neq \varphi(y)$ then $\varphi'(x)\neq
  \varphi'(y)$, and if $i\in \varphi(N[x])\simdif \varphi(N[y])$ then
  $i$ or $k+1 \in \varphi'(N[x])\simdif \varphi'(N[y])$.
  
  Let us define a relation $\mathcal R$ among vertices in $R$ by
  $r_i\mathcal R r_j$ if and only if $c(N[r_i])=c(N[r_j])$. Let
  $R_1$,...,$R_{\bar{s}}$ be the equivalence classes of the relation
  $\mathcal R$ (note that each $R_i$ forms a clique since every
  $r_i$ has distinct colors). We have $\bar{s}\geq s_1$. Indeed, by
  definition of $s_1$ and the coloring $c_1$, for each color $a\in
  \set{1',\ldots, s'_1}$, there exists $r_i\in R$ that sees $a$ in
  $G_1$ but no other color of $\set{1',\ldots,s'_1}$. This vertex
  $r_i$ belongs to some equivalence class $R_j$ and thus all the
  vertices of $R_j$ sees color $a$ in $G_1$ but no other color of
  $\set{1',\ldots,s'_1}$.

  By Corollary~\ref{cor:bondy}, there is a
  vertex set $S(R_i)$ of size at most $|R_i|-1$ which distinguishes
  all pairs of non-twin vertices in $R_i$. We give to the vertices of
  $S(R_i)$ new distinct colors. By the previous claim, this last operation
  does not damage the coloring, and now all the distinguishable edges
  are separated.
  
  Since for this last operation we need $s-\bar{s}$ new colors, since
  we used $2k'-1$ colors $\set{1,\ldots,2k'-1}$, $s_1$ colors
  $\set{1',\ldots,s_1'}$ and $s$ 
  colors $\set{1^*,\ldots,s^*}$, the total number of colors is $ (s-\bar{s}) +
  (2k'-1) + s_1 + s = 2k -1 +s_1 -\bar{s} \le 2k-1$. This concludes
  the proof of the theorem.
\end{proofof}

We are now ready to prove Theorem~\ref{thm:chlidch3}:

\begin{proofof}{Theorem~\ref{thm:chlidch3}}
  Let $\alpha$ be a low tree-depth coloring of $G$ with parameter
  $p=3$ and using $\chi^{\td}_3(G)$ colors. Let
  $A=\{\alpha_1,\alpha_2,\alpha_3\}$ be a triplet of three distinct
  colors and let $H_A$ be the subgraph of $G$ induced by the vertices
  colored by a color of $A$. Since $H_A$ has tree-depth at most $3$, by
  Proposition \ref{prop:td}, $H_A$ admits a lid-coloring $c_A$ with five colors
  (says colors $1$ to $5$). We extend $c_A$ to the whole graph by
  giving color $0$ to the vertices in $V(G)\setminus V(H_A)$.
  
  Let $A_1, A_2, \ldots, A_k$ be the $k={\chi^{\td}_3(G) \choose 3}$
  distinct triplets of colors.
  We now construct a coloring $c$ of $G$ giving to each vertex $x$ of
  $G$ the $k$-uplet $$(c_{A_1}(x),c_{A_2}(x),\ldots,c_{A_k}(x)).$$
  
  The coloring $c$ is using $6^k$ colors.
  Clearly it is a proper coloring: each pair of adjacent vertices will
  be in some common graph $H_A$ and will receive distinct colors in
  this graph.  Let $x$ and $y$ be two adjacent vertices with $N[x]\neq
  N[y]$. Let $w$ be a vertex adjacent to only one vertex among $x$ and
  $y$. Let $A=\{\alpha(x), \alpha(y), \alpha(w)\}$.  Vertices $x$ and
  $y$ are not twins in the graph $H_A$. Hence $c_A(N[x])\neq
  c_A(N[y])$ and therefore, $c(N[x])\neq c(N[y])$.
\end{proofof}

\section{Minor closed classes of graphs}\label{sec:minor}

Let $G$ and $H$ be two graphs. $H$ is a \emph{minor} of $G$ if $H$ can
be obtained from $G$ with successive edge deletions, vertex deletions
and edge contractions. A class $\clac$ is \emph{minor closed} if for
any graph $G$ of $\clac$, for any minor $H$ of $G$, we have $H\in
\clac$. The class $\clac$ is \emph{proper} if it is not the class of
all graphs. Let $H$ be a graph. A \emph{$H$-minor free graph} is a
graph that does not have $H$ as a minor. We denote by $\clak_n$ the
$K_n$-minor-free class of graphs. It is clear that any proper minor
closed class of graphs is included in the class $\clak_n$ for some
$n$.  It is folklore that any proper minor closed class of graphs
$\clac$ has a bounded chromatic number $\chi(\clac)$.

The class of graphs of bounded expansion includes all the proper minor
closed classes of graphs. Thus, by Corollary~\ref{cor:bounded}, proper
minor closed classes have bounded lid-chromatic number. In this
section, we focus on these latter classes and give an alternative
upper bound on the lid-chromatic number. This gives us an explicit
upper bound for the lid-chromatic number of planar graphs.

Consider any proper minor closed class of graphs $\clac$. Since
$\clac$ is proper, there exists $n$ such that $\clac$ does not contain
$K_n$, that is $\clac\subseteq\clak_n$. Let $\cla{C}^N$ be the class of
graphs defined by $H\in \cla{C^N}$ if and only if there exists
$G\in\clac$ and $v\in G$ such that $H=G[N(v)]$. Note that $\cla{C^N}$
is a minor-closed class of graphs. Indeed, given any $H\in\cla{C^N}$,
let $G\in\clac$ and $v\in V(G)$ such that $H=G[N(v)]$. Let
$H'$ be any minor of $H$.  Since $\clac$ is minor-closed and $H$ is a
subgraph of $G$, there exists a minor $G'$ of $G$ such that $H' =
G'[N(v)]$. Therefore, $H'$ belongs to $\cla{C^N}$.

We prove the following result on minor-closed classes of graphs:

\begin{theorem}\label{thm:induction}
  Let $\clac$ be a proper minor closed class of graphs and let
  $n\ge 3$ be such that $\clac \subseteq \clak_n$. Then
  $$\chi_{lid}(\clac) \leq 4\cdot\chi_{lid}(\cla{C^N})\cdot\chi(\clac)^{n-3}$$
\end{theorem}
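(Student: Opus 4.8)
The plan is to prove Theorem~\ref{thm:induction} by induction on $n$, where the base case $n=3$ (i.e. $\clac\subseteq\clak_3$, so $\clac$ is a class of forests) is handled directly: for a forest the closed neighborhoods of adjacent non-twin vertices are automatically distinguished, and one checks $\chi_{lid}$ is small (a constant), which is dominated by $4\cdot\chi_{lid}(\cla{C^N})\cdot\chi(\clac)^{0}$. For the inductive step, fix $G\in\clac\subseteq\clak_n$ and a proper coloring $\gamma$ of $G$ with $\chi(\clac)$ colors. The key structural idea is to look separately at each color class $V_a$ of $\gamma$: since $\gamma$ is proper, $V_a$ is an independent set, so for a vertex $u\in V_a$ all of its neighbors lie outside $V_a$; moreover $G[N(u)]$ is $K_{n-1}$-minor-free is \emph{false} in general — rather, the relevant reduction is that for the purpose of distinguishing edges inside the neighborhood of a single vertex we work in $\cla{C^N}$, which is why the factor $\chi_{lid}(\cla{C^N})$ appears.

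The coloring of $G$ will be built as a product (tuple) of three ingredients, mirroring the structure of the bound $4\cdot\chi_{lid}(\cla{C^N})\cdot\chi(\clac)^{n-3}$. First, a factor of $\chi(\clac)^{n-3}$ coming from iterating a "neighborhood coloring" $n-3$ times: for each vertex $v$, the graph $G[N(v)]$ lies in $\cla{C^N}\subseteq\clak_{n-1}$ (since $G[N(v)]$ cannot contain $K_{n-1}$, else $G$ contains $K_n$), so recursively colorings of neighborhood-graphs give, after $n-3$ levels of nesting, a bounded palette; these colors are used to separate edges $uv$ that are distinguished by a vertex $w$ lying in $N(u)\cap N(v)$ (a "common neighbor" distinguisher). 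Second, a factor $\chi_{lid}(\cla{C^N})$: for each vertex $v$, take a lid-coloring of $G[N(v)]\in\cla{C^N}$ and have $v$ "record" something about it; this handles edges $uv$ where $u,v$ are twins in the common part but the discrepancy in $c(N[u]),c(N[v])$ must be forced by an internal lid-coloring of the neighborhoods. Third, a constant factor $4$ absorbing a bounded proper-coloring / parity gadget used to ensure the whole tuple-coloring is itself proper and to break the remaining easy cases (edges $uv$ distinguished by a $w$ adjacent to exactly one of $u,v$ but not both — here a 2- or 4-coloring of an auxiliary graph suffices). One then verifies: (i) properness of the product coloring — immediate since one coordinate already uses a proper coloring; (ii) for every distinguishable edge $uv$ with distinguisher $w$, a case analysis on whether $w\in N(u)\cap N(v)$, $w\in N(u)\setminus N[v]$, $w=u$, etc., shows that one of the three ingredients separates $uv$.

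The main obstacle I expect is the bookkeeping in the inductive step that links $\clac$ at parameter $n$ to $\cla{C^N}$ and to $\clak_{n-1}$ simultaneously, and in particular making precise how a lid-coloring "inside every neighborhood" can be aggregated into a single global coloring of $G$ without the palette blowing up beyond the claimed bound. Concretely, the subtle point is that a vertex $v$ participates in the neighborhood-graph $G[N(u)]$ of \emph{each} of its neighbors $u$, so $v$ must simultaneously carry, for all such $u$, enough information; the fix is to observe that what $v$ needs to record is not a per-neighbor value but a single color from a lid-coloring of $\cla{C^N}$-graphs chosen coherently, which is exactly where the $\cla{C^N}$-minor-closedness (proved just before the theorem) is used to guarantee such colorings exist with a uniform bound. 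A secondary difficulty is checking the arithmetic: that the product of the three palette sizes is at most $4\cdot\chi_{lid}(\cla{C^N})\cdot\chi(\clac)^{n-3}$, which should come out by induction since the $\chi(\clac)^{n-3}$ factor gains one power of $\chi(\clac)$ each time $n$ increases by $1$ while the other two factors stay fixed. I would present the construction first, then properness, then the edge-separation case analysis, then the count.
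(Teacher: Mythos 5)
Your plan correctly identifies the three factors in the bound and even puts its finger on the central difficulty, but it does not resolve that difficulty, and the resolution is the whole content of the proof. You propose to take, for each vertex $v$, a lid-coloring of $G[N(v)]\in\cla{C^N}$ and have the neighbors of $v$ ``record'' it; as you note, a vertex $w$ lies in $G[N(v)]$ for every one of its neighbors $v$, so it would have to carry one color per neighbor, and the palette explodes. Saying the fix is to choose these colorings ``coherently'' is not an argument --- there is no reason a single color per vertex can serve as its color in the lid-colorings of all the (different) graphs $G[N(v)]$ simultaneously. The paper avoids this entirely by a different structural decomposition: fix one vertex $u$ of \emph{minimum degree} and partition $V(G)$ into BFS layers $\Vui$ (vertices at distance exactly $i$ from $u$). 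Contracting the ball of radius $i-1$ to a single point shows each layer graph $\Gui=G[\Vui]$ lies in $\cla{C^N}$, so each layer admits its own lid-coloring with $\chi_{lid}(\cla{C^N})$ colors, and since the layers partition $V(G)$ every vertex receives exactly one such color. The factor $4$ is then the coloring $i\bmod 4$ of the layers (separating edges between consecutive layers; minimality of the degree of $u$ handles the root), not a generic parity gadget.

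Your account of the $\chi(\clac)^{n-3}$ factor is also not workable as stated: ``iterating a neighborhood coloring $n-3$ times'' with ``$n-3$ levels of nesting'' does not correspond to any mechanism that separates the remaining edges. In the paper this factor handles the pairs $x,y$ in the same layer that are twins \emph{inside} $\Gui$ but not in $G$. Such twins form cliques of size at most $n-2$ (because $\Gui\in\clak_{n-1}$), so Bondy's theorem (Corollary~\ref{cor:bondy}) gives a distinguishing set $S(C)$ of at most $n-3$ vertices per clique, necessarily lying in the adjacent layers; these are distributed into $n-3$ rounds, and in each round one builds an auxiliary graph $H_i^k\in\clac$ by contracting each clique onto its designated distinguisher, then properly colors it with $\chi(\clac)$ colors. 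That contraction step is what forces the distinguisher $v$ of the edge $xy$ to get a color absent from $N[y]$. Neither the minimum-degree BFS layering, nor the Bondy/contraction device, nor the reason the exponent is exactly $n-3$ appears in your proposal, and no induction on $n$ is needed in the paper's proof (induction is only used afterwards, to conclude that $\chi_{lid}(\clak_n)$ is bounded). As it stands the proposal is a plausible-sounding outline whose two key steps are missing.
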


The class of trees is exactly the class $\clak_3$. Esperet et al.
\cite{EGMOP12} proved the following result.

\begin{proposition}[\cite{EGMOP12}]\label{prop:tree}
$\chi_{lid}(\clak_3)\leq 4$.
\end{proposition}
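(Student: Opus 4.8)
The plan is to exploit the very restricted structure of $\clak_3$: a member of $\clak_3$ is exactly a forest (a $K_3$ minor would require a cycle), so it suffices to prove the bound for a single tree $T$, isolated vertices and $K_2$-components being trivial. I would first pin down the twins: in a tree two adjacent vertices $u,v$ have no common neighbour (a common neighbour would create a triangle), hence $N[u]=N[v]$ forces $N(u)=\{v\}$ and $N(v)=\{u\}$. In other words, the only twins are the two endpoints of a $K_2$-component, so in any tree which is not a single edge \emph{every} edge must be separated. This also explains why a naive ``depth modulo $k$'' colouring cannot work: for two internal vertices at consecutive depths one has $\{d-1,d,d+1\}\equiv\{d,d+1,d+2\}\pmod 3$, so one needs to break this symmetry.

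The colouring I would use is the following. Root $T$ and let $A$ (resp.\ $B$) be the vertices of even (resp.\ odd) depth; this is a bipartition of $T$. Colour the vertices of $A$ with colours in $\{1,2\}$ and those of $B$ with colours in $\{3,4\}$, so the colouring is automatically proper and uses four colours. The key observation is that such a colouring is locally identifying as soon as \emph{every vertex of degree at least $2$ has a non-monochromatic neighbourhood}. Indeed, if $u$ has degree $\ge 2$, all neighbours of $u$ lie in the class opposite to $u$ and, being non-monochromatic, use \emph{both} colours of that class; hence $c(N[u])$ is $c(u)$ together with the two colours of the opposite class, a set of size $3$ meeting $u$'s own class in exactly $\{c(u)\}$. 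Now take an edge $uv$ which is not a $K_2$-component; then at least one of $u,v$ has degree $\ge 2$, and a short case analysis finishes the job: if $v$ is a leaf then $|c(N[v])|=2<3=|c(N[u])|$; if both have degree $\ge 2$, say with $u\in A$, then $c(N[u])\cap A=\{c(u)\}$ while $c(N[v])\cap A$ has size $2$; in all cases $c(N[u])\ne c(N[v])$.

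It remains to build a colouring with the non-monochromatic property, which I would do by a greedy top-down pass. Colour the root arbitrarily within its class, then process the vertices in BFS order; when a vertex $u$ is reached its colour is already fixed, and I choose the colours of its children: if $u$ has at least two children I give two of them (or the first child and $u$'s parent) distinct colours of the appropriate class; if $u$ has exactly one child $w$ and a parent $p$ I give $w$ a colour different from $c(p)$; if $u$ has one child and no parent, or no child at all, there is nothing to enforce. In each case the neighbourhood of $u$ is non-monochromatic whenever $u$ has degree $\ge 2$, because a non-root vertex of degree $\ge 2$ has a parent and at least one child (so it is dealt with when processed), and a root of degree $\ge 2$ has at least two children. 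Since each vertex receives its colour exactly once, the pass is well defined, and it produces a $4$-colouring which by the previous paragraph is locally identifying; hence $\chi_{lid}(\clak_3)\le 4$.

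The step I expect to be the main obstacle is not the construction but the \emph{reduction}: recognising that separating closed neighbourhoods in a tree is equivalent to the modest-looking condition ``no high-degree vertex sees a monochromatic neighbourhood'', together with the treatment of the internal vertices of degree exactly $2$, whose unique child must be coloured away from the already-fixed colour of the parent — which is precisely why the pass has to go top-down. Everything else is routine bookkeeping with the palette $\{1,2\}\cup\{3,4\}$.
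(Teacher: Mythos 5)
Your argument is correct, but it is a genuinely different route from what the paper does. The paper does not prove Proposition~\ref{prop:tree} at all: it imports the bound from Esperet \emph{et al.}~\cite{EGMOP12}, and then remarks that it also follows from Theorem~\ref{thm:induction}. Specialized to $\clak_3$ (take $n=3$, so $\cla{C^N}$ is the class of stable graphs and the $c_2$, $c_3$ layers of the construction are trivial), the paper's general machinery collapses to an extremely simple explicit coloring: pick a minimum-degree vertex $u$ (a leaf) and color every vertex by its distance to $u$ modulo $4$; Case~1 of that proof is exactly the verification that this separates all edges of a tree. Your proof instead bipartitions the tree by parity of depth, gives each side a private two-color palette, reduces local identification to the condition that every vertex of degree at least $2$ sees both colors of the opposite palette, and achieves that condition by a top-down greedy pass. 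I checked the details: your characterization of twins in a forest (only the two ends of a $K_2$-component), the three-way case analysis on an edge $uv$ (comparing $\lvert c(N[u])\rvert$ with a leaf, and comparing $c(N[\cdot])\cap\set{1,2}$ when both endpoints have degree at least $2$), and the well-definedness of the greedy pass (each vertex is colored exactly once, when its parent is processed) all hold. What the paper's route buys is uniformity — the tree case is the base case of an induction on $n$ and needs no separate construction; what your route buys is a self-contained, elementary proof that does not pass through the minor-closed machinery, at the cost of the extra greedy invariant.
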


It is clear that $\clak_3^N$ is the class of stable graphs and therefore,
$\chi_{lid}(\clak_3^N) = 1$. Note that Theorem~\ref{thm:induction} implies
Proposition~\ref{prop:tree}.

Assume that $\chi_{lid}(\clak_{n-1})$ is bounded for some $n\ge 4$. It
is clear that $\clak_{n}^N = \clak_{n-1}$. Then, by
Theorem~\ref{thm:induction}, we have $\chi_{lid}(\clak_n) \leq
4\cdot\chi_{lid}(\clak_{n-1})\cdot\chi(\clak_n)^{n-3}$. Since
$\chi_{lid}(\clak_{n-1})$ and $\chi(\clak_n)$ are bounded,
$\chi_{lid}(\clak_n)$ is bounded.

Esperet et al. \cite{EGMOP12} also proved the following result.

\begin{proposition}[\cite{EGMOP12}]\label{prop:outer}
If $G$ is an outerplanar graph, $\chi_{lid}(G)\leq 20$.
\end{proposition}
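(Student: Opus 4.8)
\medskip
\noindent\textbf{Proof proposal.}
The plan is to prove $\chi_{lid}(G)\le 20$ for every outerplanar graph $G$ by induction on $|V(G)|$, exploiting that outerplanar graphs are $2$-degenerate: every outerplanar graph has a vertex of degree at most $2$, and deleting such a vertex keeps the graph outerplanar. It is convenient to record first a structural fact that also clarifies the inductive step: for an outerplanar graph $G$ and any vertex $v$, the graph $G[N(v)]$ is a disjoint union of paths. Indeed, a triangle inside $N(v)$ would, together with $v$, be a $K_4$ in $G$; a cycle inside $G[N(v)]$ would, together with $v$, give a $K_4$-minor in $G$; and a vertex of degree at least $3$ inside $G[N(v)]$ would, together with $v$, give a $K_{2,3}$-minor in $G$; all three are impossible since $G$ is outerplanar, so $G[N(v)]$ has maximum degree at most $2$ and is acyclic.

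For the inductive step, let $v$ have degree at most $2$ in $G$, with neighbor(s) $x$ (and possibly $y$), and set $G'=G-v$; by induction $G'$ has a locally identifying coloring $c'$ using at most $20$ colors. Restoring $v$ leaves $N_G[w]=N_{G'}[w]$ for every $w\in V(G')\setminus\{x,y\}$ and only adds the color of $v$ to the color sets of $N_G[x]$ and $N_G[y]$. Hence, if we keep $c'$ unchanged on $V(G')$, the only edges whose properness or separation is not automatically inherited are the edges among $\{v,x,y\}$ and those old edges at $x$ or $y$ that were separated in $c'$ only by the color $c(v)$. I would first try to choose $c(v)$ among the $20$ colors so as to avoid the at most two colors already forced in $N_G[v]$ and the finitely many colors of such singly-separated edges at $x$ and $y$; whenever this is not directly possible, I would instead repair those few endangered edges together with the edges among $\{v,x,y\}$ by recoloring $v$, $x$, $y$ and a constant number of vertices of $N[x]\cup N[y]$ with fresh colors, using the refinement principle of the Claim in the proof of Proposition~\ref{prop:td} (recoloring part of a color class never destroys an already-achieved properness or separation). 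The \emph{main obstacle} is this finite case analysis: one must simultaneously keep the coloring proper, separate the triangle $vxy$ and the edges $vx,vy$ (including the delicate sub-cases where $x$ and $y$ are twins in $G'$ but not in $G$, or where $v$ is a twin of $x$), and re-separate the boundedly many endangered old edges at $x$ and $y$ --- and it is exactly the arithmetic of these constraints that pins the constant at $20$.

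For context, I would also note that Theorem~\ref{thm:induction} of this paper already gives a painless weaker bound: taking $\clac$ to be the class of outerplanar graphs, the structural fact above shows that the associated class $\cla{C^N}$ is contained in $\clak_3$, so $\chi_{lid}(\cla{C^N})\le 4$ by Proposition~\ref{prop:tree}; since $\clac\subseteq\clak_4$ and $\chi(\clac)=3$, Theorem~\ref{thm:induction} yields $\chi_{lid}(G)\le 4\cdot 4\cdot 3=48$. Sharpening $48$ to $20$ is precisely the point of the hands-on induction above.
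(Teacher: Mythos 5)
The paper does not actually prove this statement: Proposition~\ref{prop:outer} is imported verbatim from the cited paper of Esperet et al.\ and used as a black box (together with Theorem~\ref{thm:induction}) to get the bound of $1280$ for planar graphs. So there is no in-paper proof to compare yours against; your proposal has to stand on its own, and as written it does not.

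Your preliminary observations are correct: outerplanar graphs are $2$-degenerate, $G[N(v)]$ is a linear forest, and your derivation of the weaker bound $4\cdot 4\cdot 3=48$ from Theorem~\ref{thm:induction} and Proposition~\ref{prop:tree} is valid and non-circular (Theorem~\ref{thm:induction} does not rely on Proposition~\ref{prop:outer}). But the inductive step, which is the entire content of the constant $20$, is only gestured at, and the two mechanisms you offer both break down. First, the greedy choice of $c(v)$ can be impossible: the forbidden set contains (i) every color $a$ for which some neighbor $w$ of $x$ has $c'(N[w])=c'(N[x])\cup\{a\}$, which can be up to $20-|c'(N[x])|$ colors; (ii) all of $c'(N[x])$ whenever $x$ has a neighbor $z$ that is a twin of $x$ in $G'$ but not in $G$, since the edge $xz$ must now be separated and $c(v)$ is the only available separator, forcing $c(v)\notin c'(N[z])=c'(N[x])$; and (iii) the analogous sets at $y$. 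Items (i) and (ii) alone can already cover all $20$ colors, so the "directly possible" branch cannot be assumed. Second, the repair branch is not a proof step: recoloring with genuinely fresh colors exceeds the budget of $20$, while recoloring with already-used colors is not a refinement, so the Claim from the proof of Proposition~\ref{prop:td} gives you nothing; worse, recoloring $x$ changes $c(N[w])$ for \emph{every} neighbor $w$ of $x$ and can unseparate edges $wz$ arbitrarily far from $v$, so the damage is not confined to a constant number of vertices of $N[x]\cup N[y]$. This non-local cascading is precisely why degeneracy inductions are delicate for locally identifying colorings, and it is the part of the argument that is missing; in particular the value $20$ is never derived. If you want a self-contained bound for outerplanar graphs, the $48$ you obtain from Theorem~\ref{thm:induction} is the honest output of your argument; the improvement to $20$ requires the substantially more careful analysis of the cited reference.
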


We can then deduce from Theorem~\ref{thm:induction} and
Proposition~\ref{prop:outer} the following corollary:

\begin{corollary}
 Let $\cla{P}$ be the class of planar graphs. Then
 $\chi_{lid}(\cla{P})\leq 1280$. 
\end{corollary}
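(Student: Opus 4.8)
The plan is to apply Theorem~\ref{thm:induction} to the class $\cla{P}$ of planar graphs, using the known bounds on outerplanar graphs and on the chromatic number of planar graphs as the base ingredients. The key observation is that planar graphs are exactly the $K_5$-minor-free \emph{and} $K_{3,3}$-minor-free graphs, so in particular $\cla{P}\subseteq\clak_5$; thus we may invoke Theorem~\ref{thm:induction} with $n=5$, giving
$$\chi_{lid}(\cla{P})\leq 4\cdot\chi_{lid}(\cla{P}^N)\cdot\chi(\cla{P})^{5-3}=4\cdot\chi_{lid}(\cla{P}^N)\cdot\chi(\cla{P})^2.$$
By the Four Colour Theorem, $\chi(\cla{P})\le 4$, so $\chi(\cla{P})^2\le 16$ and it remains only to bound $\chi_{lid}(\cla{P}^N)$.

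The heart of the argument is therefore to identify $\cla{P}^N$, the class of graphs $G[N(v)]$ for $G$ planar and $v\in V(G)$. First I would observe that if $G$ is planar and $v\in V(G)$, then the neighbourhood of $v$ can be arranged around $v$ respecting a planar embedding, so $G[N(v)]$ is outerplanar: adding back the vertex $v$ adjacent to all of $N(v)$ must still be planar, and a graph $H$ such that $H$ plus a universal vertex is planar is exactly an outerplanar graph. Hence $\cla{P}^N$ is contained in the class of outerplanar graphs, and by Proposition~\ref{prop:outer} we get $\chi_{lid}(\cla{P}^N)\le 20$. Combining everything,
$$\chi_{lid}(\cla{P})\leq 4\cdot 20\cdot 4^{2}=4\cdot 20\cdot 16=1280,$$
which is the claimed bound.

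The only genuinely non-routine step is checking that $\cla{P}^N$ really is a subclass of the outerplanar graphs, i.e. the folklore fact that $H$ is outerplanar if and only if $H$ together with one extra vertex joined to every vertex of $H$ is planar. I would prove this by noting that a planar embedding of $H+v$ with $v$ inside some face $f$ forces all of $V(H)$ onto the boundary walk of $f$ once $v$ is deleted, and conversely an outerplanar embedding of $H$ with all vertices on the outer face lets us place $v$ in the outer region and connect it to all of them; alternatively one can cite the Kuratowski-type characterisation ($H$ outerplanar $\iff$ no $K_4$ or $K_{2,3}$ minor) against the planarity of $H+v$. A small subtlety worth a sentence is that $\cla{P}^N$ is exactly the class of outerplanar graphs rather than a proper subclass — but for the corollary we only need the containment, so the upper bound $\chi_{lid}(\cla{P}^N)\le\chi_{lid}(\mathcal{O})\le 20$ from Proposition~\ref{prop:outer} suffices.
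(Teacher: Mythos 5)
Your proof is correct and follows essentially the same route as the paper: apply Theorem~\ref{thm:induction} with $n=5$, bound $\chi_{lid}(\cla{P}^N)\le 20$ via Proposition~\ref{prop:outer} because vertex neighbourhoods in planar graphs are outerplanar, and use the Four Colour Theorem for $\chi(\cla{P})=4$. The only difference is that you spell out the folklore fact that $H$ plus a universal vertex being planar forces $H$ to be outerplanar, which the paper asserts without proof.
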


\begin{proof}
  Any graph $G\in\cla{P}$ is $\set{K_{3,3},K_5}$-minor free and thus
  $\cla{P}$ is a proper minor closed class of graphs. Moreover, the
  neighborhood of any vertex of $G\in\cla{P}$ is an outerplanar graph.
  By Proposition \ref{prop:outer}, we have $\chi_{lid}(\cla{P}^N)\leq
  20$.  Furthermore, the Four-Color-Theorem gives $\chi(\cla{P}) = 4$.
  By Theorem \ref{thm:induction}, $\chi_{lid}(\cla{P})\leq 4\times 20
  \times 4^2=1280$.
\end{proof}

We finally give the proof of Theorem~\ref{thm:induction}.

\begin{proofof}{Theorem~\ref{thm:induction}}
  Let $G\in \clac$ and let $u$ be a vertex of minimum degree. For any
  $i$, define $\Vui$ as the set of vertices of $G$ at distance exactly
  $i$ from $u$ and let $\Gui = G[\Vui]$. Let $s$ be the largest
  distance from a vertex of $V$ to $u$. In other words, there are
  $s+1$ nonempty sets $\Vui$ (note that $V_{\{u,0\}} = \set{u}$).
  
  For any $i$, contracting in $G$ the subgraph
  $G[\V{u}{0}\cup\V{u}{1}\cup\ldots\cup\V{u}{i-1}]$ in a single vertex
  $x$ gives a graph $G'\in \clac$ such that $x$ is exactly adjacent to
  every vertex of $\Gui$. Therefore, for any $i$, $\Gui\in\cla{C^N}$.
  Hence, $\chi_{lid}(\Gui) \le \chi_{lid}(\cla{C^N})$ for any $i$.
  Moreover, $\cla{C^N} \subseteq \clak_{n-1}$. Indeed, suppose that
  there exists $H\in\cla{C^N}$ that admits $K_{n-1}$ as a minor.
  Therefore there exists $G\in \clac$ such that $H=G[N(v)]$ for some
  $v\in G$.  Taking $v$ together with its neighborhood would give
  $K_n$ as a minor, that contradicts the fact that
  $\clac\subseteq\clak_n$. Hence, any $\Gui\in\clak_{n-1}$.
  
  We construct a lid-coloring of $G$ using
  $4\cdot\chi_{lid}(\cla{C^N})\cdot\chi(\clac)^{n-3}$ colors. This coloring is
  constructed with three different colorings of the vertices of $G$:
  $c_1$ which uses $4$ colors, $c_2$ which uses
  $\chi_{lid}(\cla{C^N})$ colors and $c_3$ which is itself composed of
  $n-3$ colorings with $\chi(\clac)$ colors. The final color $c(v)$ of
  a vertex $v$ will be the triplet $(c_1(v),c_2(v),c_3(v))$.  Hence
  the coloring $c$ uses at most
  $4\chi_{lid}(\cla{C^N})\chi(\clac)^{n-3}$ colors. The coloring $c_1$
  is used to separate the pairs of vertices that lie in distinct sets
  $\Vui$. The coloring $c_2$ separates the pairs of vertices that lie
  in the same set $\Vui$ and are not twins in $\Gui$. Finally, the
  coloring $c_3$ separates the pairs of vertices that lie in the same
  set $\Vui$, that are twins in $\Gui$ but that are not twins in $G$.

The coloring $c_1$ is simply defined by $c_1(v)\equiv i\bmod 4$ if
$v\in \Vui$.  

To define $c_2$, we define for each $i$, $0\leq i \leq s$, a
lid-coloring $c^i_{2}$ of $\Gui$ using colors $1$ to
$\chi_{lid}(\cla{C^N})$. Then $c_2$ is defined by $c_2(v)=c^i_{2}(v)$
if $v\in \Vui$.

We now define the coloring $c_3$. Let $\Vui^{id}$ be the set of
vertices of $\Vui$ that have a twin in $\Gui$:
$$\Vui^{id}=\{ v \in \Vui \ |\ \exists w \in \Vui, N_{\Gui}[v]= N_{\Gui}[w]\}.$$

Let $\Gui^{id} = \Gui[\Vuid]$. Since the relation ``be twin'' is
transitive (i.e. if $u$ and $v$ are twins, and $v$ and $w$ are twins,
then $u$ and $w$ are twins), then $\Gui^{id}$ is clearly a union of
cliques. In addition, since $\Gui\in\clak_{n-1}$, the
connected components of $\Gui^{id}$ are cliques of size at most $n-2$.

Let $C$ be a clique of $\Gui^{id}$. 
By Corollary~\ref{cor:bondy}, there exists a subset $S(C)\subseteq V(G)$
of at most $n-3$ vertices that distinguishes all the pairs of non-twin
vertices of $C$. Note that by definition of $C$, $S(C)\cap \Vui = \emptyset$, 
and thus $S(C) \subseteq V_{u,i-1}\cup V_{u,i+1}$.

Let $\mathcal S = \{(v,C) \mid v \in S(C) \text{ and $C$ is a clique in
  a graph $\Gui^{id}$}\}$.
We partition $\mathcal S$ in $s\times(n-3)$ sets $S_i^{k}$, $1\leq i
\leq s$, $1\leq k \leq n-3$, such that:
\begin{itemize}
\item if $(v,C) \in S_i^k$ for some $k$, then $v\in \Vui$; 
\item if $(v,C)$ and $(w,C')$ are two elements of $S_i^k$, then $C\neq C'$.
\end{itemize}
This partition can be done because each set $S(C)$ has size at most
$n-3$.

For each $S_i^k = \set{(x_1,C_1),(x_2,C_2),\ldots,(x_t,C_t)}$, we
define a graph $H_i^k$ as follows.  We start from the graph induced by
$\Vui \cup V(C_1) \cup V(C_2) \cup \ldots \cup V(C_t)$. Then,
for each $(x_j,C_j)$ in $S_i^k$, we contract $C_j$ in a single vertex
$y_j$ and finally, we contract the edge $x_jy_j$ on the vertex $x_j$. Note that
$\Vui$ is the vertex set of $H_i^k$.
Note also that $H_i^k\in\clac$ since it is obtained from a subgraph of $G$ 
by successive edge-contractions. Therefore, $\chi(H^i_k) \leq
\chi(\clac)$. 

We now define a proper coloring $c_3^{i,k}$ of $H_i^k$ with colors $1$
to $\chi(\clac)$.  Let $c_3^k$ be the coloring of
vertices of $G$ defined by $c_3^k(v)=c_3^{i,k}(v)$ if $v\in \Vui$.
Finally, $c_3$ is defined by $c_3(v)=(c_3^1(v),\ldots,c_3^{n-3}(v))$, and
the final color of $v$ is $c(v)=(c_1(v),c_2(v),c_3(v))$.

We now prove that $c$ is a lid-coloring of $G$. First, $c$ is a proper
coloring. Indeed, two adjacent vertices that are not in the same set
$\Vui$ lie in consecutives set $\Vui$ and $\V{u}{i+1}$ and thus have
different colors in $c_1$, and two adjacent vertices in the same set
$\Vui$ have different colors in $c_2$ (which induces a proper coloring
on $\Vui$).

Let now $x$ and $y$ be two adjacent vertices with $N[x]\neq N[y]$. We
will prove that $c(N[x])\neq c(N[y])$. We distinguish three cases.

\begin{enumerate}[C{a}se 1:]
  \item $x\in \Vui$ and $y\in \V{u}{i+1}$.
    
    If $x=u$, then $y$ has a neighbor $v$ in $\V{u}{i+2}=\V{u}{2}$.
    Indeed, $u$ is taken with minimum degree, so $y$ has at least as
    many neighbors as $u$ and does not have the same neighborhood
    than $u$, implying that $y$ has a neighbor in $\V{u}{2}$.  Then
    $c_1(v)=2\notin c_1(N[u])$ and so $c(N[x])\neq c(N[y])$.
    
    Otherwise, $x$ has neighbor $v$ in $\V{u}{i-1}$ and $c_1(v)\equiv
    i-1 \pmod 4 \in c_1(N[x])$. On the other hand, all the neighbors
    of $y$ belong to $\Vui \cup \V{u}{i+1} \cup \V{u}{i+2}$ and
    therefore $c_1(N[y]) \subseteq \{i,i+1,i+2 \pmod 4\}$. Thus,
    $c(N[x])\neq c(N[y])$.

\item $x$ and $y$ belong to $\Vui$ and they are not twins in $\Vui$
  (i.e. $N_{\Vui}[x] \neq N_{\Vui}[y]$).
  
  By definition of the coloring $c_2^i$, there exists a color $a$ that
  separates $x$ and $y$, i.e. $a \in c_2^i(N_{\Vui}[x])\simdif
  c_2^i(N_{\Vui}[y])$. Then we necessarily have $c(N[x])\neq c(N[y])$. 

\item $x$ and $y$ belong to $\Vui$ and they are twins in $\Vui$
  (i.e. $N_{\Vui}[x] = N_{\Vui}[y]$).
  
  In this case, vertices $x$ and $y$ are in the set $\Vui^{id}$. Let
  $C$ be the clique of $\Gui$ containing $x$ and $y$. Let $v\in S(C)$
  that distinguishes $x$ and $y$; thus, $v\in
  \V{u}{j}$ for $j=i-1$ or $j=i+1$. Wlog, $v\in N[x]$ but $v\notin
  N[y]$. Let $S_j^k$ be the part of $\mathcal S$ that contains
  $(v,C)$. Suppose that there exists a neighbor $w$ of $y$ such that
  $c(v)=c(w)$. Then $w$ lies in $\V{u}{j}$ because of the coloring
  $c_1$. However, in the graph $H_j^k$, the vertex $v$ is adjacent to
  all the neighbors of $y$ in $\V{u}{j}$, and in particular is
  adjacent to $w$; therefore, $c_3^{j,k}(v) \neq c_3^{j,k}(w)$, a
  contradiction. Therefore, the vertex $y$ does not have any neighbor
  that has the same color as $v$. Hence, $c(v)\notin c(N[y])$, and
  $c(N[x])\neq c(N[y])$.
\end{enumerate}
\end{proofof}

\end{document}